\DeclareMathOperator{\li}{li}
\newtheorem{thm}{Theorem}[section]
\newtheorem{conj}{Conjecture}[section]
\title{\textbf{Inequalities For The Primes Counting Function}}
\date{}
\author{N. A. Carella}
\begin{document}
\thispagestyle{empty}
\maketitle

\vskip .25 in 
\textbf{Abstract:} The prime counting function inequality $\pi(x+y) < \pi(x)+\pi(y)$, which is known as Hardy-Littlewood conjecture, has been established for a variety of cases such as $ \delta x \leq y \leq x$, where $0< \delta \leq 1$, and $x \leq y\leq x \log x \log \log x$ as $ x \to \infty$. The goal in note is to extend the inequality to the new larger ranges $\geq x \log^{-c}x\leq y \leq x$, where $c\geq 0$ is a constant, unconditionally; and for $\geq x^{1/2} \log^3x\leq y \leq x$, conditional on a standard conjecture. 
\\


\vskip .25 in 

\textbf{Keyword:}
Distribution of prime; Prime in short interval; Hardy-Littlewood conjecture; Prime $k$-tuple.\\

 \textbf{AMS Mathematical Subjects Classification:} 11N05, 11A41, 11M26.\\

\vskip .75 in

\section{Introduction}
Let $ x\geq 1$ be a large number, and let $\pi(x)= \#\{p\leq x: p\text{ is prime} \}$. There are many partial results for the prime counting function inequality 
\begin{equation} \label{eq120.99}
\pi(x+y) < \pi(x)+\pi(y).
\end{equation}
The range of parameter $\delta x \leq y \leq x$, with a constant $0<\delta \leq 1$ as $x \to \infty$, is proved in \cite[Theorem 3]{PL00}. The range of parameter $x \leq y\leq x \log x \log \log x$ as $x \to \infty$, is proved in \cite{DP98}. Various other related inequalities are proved in \cite{MV75}, \cite{GR02}, \cite{PL02}, and by other authors. The Hardy-Littlewood conjecture states that inequality (\ref{eq120.99}) is valid for any $x \geq 2$ and any $y \geq 2$. The goal in note is to extend the inequality to larger ranges than it is currently known. It is shown that it is valid for the range of parameter $x \log^{-c}x\leq y \leq x$, where $c\geq 0$ is a constant, unconditionally. And for $ x^{1/2} \log^3x \leq y \leq x$, conditional on the RH. These are new results in the mathematical literature.

\begin{thm} \label{thm285.01} Let $x\geq 2$ be a large number, and let $x \log^{-c}x\leq y \leq x$, with $c\geq 0$ an arbitrary constant. Then
\begin{equation}
 \pi(x+y) < \pi(x)+\pi(y).
\end{equation}
\end{thm}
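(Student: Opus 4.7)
The plan is to replace $\pi$ by the logarithmic integral via the prime number theorem and to show that the resulting \emph{logarithmic integral gap}
\[
G(x,y) \;:=\; \li(x) + \li(y) - \li(x+y)
\]
is strictly positive and large enough to dominate the PNT error. Invoking the Vinogradov--Korobov zero-free region one has $\pi(t) = \li(t) + O\!\bigl(t\exp(-a\sqrt{\log t})\bigr)$ for some absolute $a > 0$, so
\[
\pi(x+y) - \pi(x) - \pi(y) \;=\; -\,G(x,y) \;+\; O\!\bigl(x\exp(-a\sqrt{\log x})\bigr),
\]
and the theorem reduces to the analytic estimate $G(x,y) \gg x\exp(-a\sqrt{\log x})$ throughout the range $x\log^{-c}x \le y \le x$.

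For the lower bound on $G$ I would write $\li(x+y) - \li(x) = \int_x^{x+y} dt/\log t$, change variables $t = x+u$, and subtract from $\li(y) = \int_0^y du/\log u$ (in the usual principal-value sense) to obtain
\[
G(x,y) \;=\; \int_0^y \frac{\log(1 + x/u)}{\log u \,\log(x+u)}\, du \;+\; O(1).
\]
Bounding $\log(x+u) \le \log(2x)$ in the denominator and $\log(1+x/u) \ge \log(x/u)$ on $[2,y]$, together with the elementary identity $\int_2^y \log(x/u)\, du = y\log(ex/y) + O(\log x)$, yields the clean bound
\[
G(x,y) \;\ge\; \frac{y\log(ex/y)}{\log^2(2x)} + O(1) \;\gg\; \frac{y}{\log^2 x},
\]
uniformly for $2 \le y \le x$.

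Plugging $y \ge x/\log^c x$ into this bound gives $G(x,y) \gg x/\log^{c+2}x$, which dominates the PNT error $x\exp(-a\sqrt{\log x})$ for all sufficiently large $x$, since any fixed power of $\log x$ grows slower than $\exp(a\sqrt{\log x})$. A finite check at small $x$ disposes of the remaining cases. I expect the only point requiring care to be the small-$y$ endpoint $y = x/\log^c x$, where both the gap and the error are smallest; fortunately the factor $\log(ex/y) \asymp c\log\log x$ appearing in the lower bound on $G$ offers a little extra room precisely at this endpoint, so no strengthening of the prime number theorem is necessary.
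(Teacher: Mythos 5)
Your proposal is correct, but it takes a genuinely different route from the paper's. Both arguments start from the unconditional prime number theorem and reduce the problem to comparing $\int_x^{x+y}\frac{dt}{\log t}$ with $\li(y)$; the difference lies in how that comparison is carried out. The paper argues by contradiction and applies the first mean value theorem for integrals to each integral separately, replacing them by $y/\log x_0$ and $(y-2)/\log x_1$ with unknown points $x_0\in(x,x+y)$ and $x_1\in(2,y)$, and then tries to contradict $\log x_1/\log x_0\geq 1-2/y$ using only the crude localizations $x_0\geq x$, $x_1\leq y$. You instead bound the logarithmic-integral gap directly, via the identity
\[
\li(x)+\li(y)-\li(x+y)=\int_2^y\frac{\log(1+x/u)}{\log u\,\log(x+u)}\,du+O(1),
\]
and the resulting lower bound $\gg y\log(ex/y)/\log^2(2x)$. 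Your version buys uniformity and robustness: it produces an explicit positive main term of size $\gg y/\log^2 x$ that visibly dominates the PNT error throughout $x\log^{-c}x\leq y\leq x$, and it handles the endpoint $y=x$ (i.e.\ $c=0$) with no special treatment, whereas the paper's concluding inequality degenerates to $1-2/x\leq 1+O\left(\log x\, e^{-\sqrt{\log x}}\right)$ when $c=0$, which is not a contradiction; your direct estimate avoids that weakness. Two small points to tidy up: state the integral identity with lower limit $2$ (or be explicit about the principal value at $u=1$), and note that the step from $\frac{y\log(ex/y)}{\log^2(2x)}+O(1)$ to $\gg y/\log^2 x$ needs $y/\log^2 x\to\infty$ to absorb the $O(1)$, so it is not literally uniform down to $y=2$ --- though it certainly holds in the range you use it.
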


\begin{thm} \label{thm295.02} Let $x\geq 2$ be a large number, and let $x^{1/2} \log^3x \leq y \leq x$. Assume the nontrivial zeros of the zeta function are on the line $\Re e(s)=1/2$. Then
\begin{equation}
 \pi(x+y) < \pi(x)+\pi(y).
\end{equation}
\end{thm}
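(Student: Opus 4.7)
The plan is to reduce the prime-counting inequality to an estimate on the logarithmic integral $\li$ via the Riemann hypothesis, and then establish that estimate by a direct integral comparison. Under RH one has Schoenfeld's explicit bound $\pi(u)=\li(u)+O(u^{1/2}\log u)$ with an effective implied constant. Applying this at $u=y$, $u=x$, and $u=x+y$ and using $y\le x$ gives
\[
\pi(x+y)-\pi(x)-\pi(y) \;=\; \bigl[\li(x+y)-\li(x)-\li(y)\bigr] + O\bigl(x^{1/2}\log x\bigr).
\]
Thus the theorem reduces to showing
\[
\li(y) + \li(x) - \li(x+y) \;>\; C\,x^{1/2}\log x
\]
for some constant $C$ strictly exceeding the cumulative RH error contribution.

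Next I would bound the main term from below. After the translations $t=s+2$ and $t=s+x$, the integrals become $\li(y) = \int_0^{y-2}ds/\log(s+2)$ and $\li(x+y)-\li(x) = \int_0^y ds/\log(s+x)$, so
\[
\li(y)+\li(x)-\li(x+y) \;=\; \int_0^{y-2}\!\!\left[\tfrac{1}{\log(s+2)}-\tfrac{1}{\log(s+x)}\right]ds \;-\; \int_{y-2}^{y}\!\tfrac{ds}{\log(s+x)},
\]
with the second integral $O(1/\log x)$. The integrand of the first equals $\log\!\bigl(\tfrac{s+x}{s+2}\bigr)/[\log(s+2)\log(s+x)]$, which is strictly positive. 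For $s\in[y/2,\,y]$ and $y\le x$ (with $x\ge 6$) one has $(s+x)/(s+2)\ge 3/2$, so the integrand is $\ge \log(3/2)/[\log(y+2)\log(y+x)]\gg 1/\log^2 x$. Integrating over $[y/2,y]$ yields the uniform lower bound
\[
\li(y)+\li(x)-\li(x+y) \;\gg\; \frac{y}{\log^2 x}.
\]

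Under the hypothesis $y \ge x^{1/2}\log^3 x$, this lower bound is $\gg x^{1/2}\log x$ and therefore dominates the RH error for $x$ sufficiently large, yielding the strict inequality $\pi(x+y)<\pi(x)+\pi(y)$. The main obstacle is the threshold case $y\asymp x^{1/2}\log^3 x$: there the main-term lower bound and the RH error are both of order $x^{1/2}\log x$, so one must verify that the numerical constants are compatible, i.e.\ that the multiplicative constant produced by the integral argument (essentially $\tfrac12\log(3/2)$) strictly exceeds the sum of the three Schoenfeld-type constants $\le 3/(8\pi)$. This can be checked directly; alternatively a sharper asymptotic analysis removes the difficulty, since for $y\ll x$ one actually has $\li(y)+\li(x)-\li(x+y)\sim y\log(x/y)/(\log x\log y)\sim y/\log x$, restoring a margin of $\log x$ in the threshold regime. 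The exponent $3$ in $y\ge x^{1/2}\log^3 x$ corresponds precisely to this balancing.
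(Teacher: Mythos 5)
Your proposal is correct in substance and, while it starts from the same reduction as the paper (apply the RH form $\pi(u)=\li(u)+O(u^{1/2}\log u)$ at $u=y$, $x$, $x+y$ and reduce to showing $\li(x)+\li(y)-\li(x+y)$ dominates the error $O(x^{1/2}\log x)$), your treatment of the main term is genuinely different and in one respect stronger. The paper applies the mean value theorem to $\int_x^{x+y}dt/\log t$ and $\int_2^y dt/\log t$ separately, obtains $\tfrac{\log x_1}{\log x_0}\le \tfrac{\log y}{\log x}$, and derives its contradiction only after substituting $y=x^{1/2}\log^3x$; since $\tfrac{\log y}{\log x}\to 1$ as $y\to x$, that particular contradiction degenerates near the top of the range, so the paper's argument as written really only certifies the lower endpoint. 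Your version instead writes
\begin{equation*}
\li(y)+\li(x)-\li(x+y)=\int_0^{y-2}\left[\frac{1}{\log(s+2)}-\frac{1}{\log(s+x)}\right]ds+O\!\left(\frac{1}{\log x}\right)
\end{equation*}
and lower-bounds the (positive) integrand pointwise on $[y/2,\,y-2]$ (note: not $[y/2,y]$ as you wrote, though this costs nothing), giving $\gg y/\log^2x$ \emph{uniformly} over the whole range $x^{1/2}\log^3x\le y\le x$; that uniformity is exactly what the theorem requires and what the paper's computation does not visibly deliver. Your handling of the one delicate point is also honest and adequate: at the threshold $y\asymp x^{1/2}\log^3x$ the crude bound and the RH error are both of size $x^{1/2}\log x$, and either of your remedies closes the gap --- the numerical check works ($\tfrac12\log(3/2)\approx 0.203$ against a Schoenfeld-type total of at most about $(2+\sqrt2)/(8\pi)\approx 0.136$, and in the threshold regime the $\sqrt{y}\log y$ term is negligible so the error constant is effectively $2/(8\pi)$), and the sharper asymptotic $\li(x)+\li(y)-\li(x+y)\sim y\log(x/y)/(\log x\log y)\sim y/\log x$ indeed restores a factor of $\log x$ of room. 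In short: same skeleton, but a cleaner and more robust main-term estimate; to make it fully rigorous you need only fix the integration interval and carry out one of the two constant verifications you already identified.
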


The proof of Theorem \ref{thm285.01} is assembled in Section \ref{s285}, and the proof of Theorem \ref{thm295.02} is assembled in Section \ref{s295}. The penultimate section inquires on the limitiation of the prime counting inequality (\ref{eq120.99}).
\section{Unconditional Result} \label{s285}
The unconditional result for the \textit{prime number theorem}, see Theorem \ref{thm200.02}, together with the \textit{mean value theorem} for integral, see Theorem \ref{thm555.01}, give a nice and simple proof for the prime counting function inequality (\ref{eq120.99}) over the larger interval $[x, x+y]$ with $x \log^{-c}x\leq y \leq x$, for any constant $c\geq 0$.
\begin{proof} (Theorem  \ref{thm285.01}) By the \textit{prime number theorem}, the prime counting function has the integral representation
\begin{equation} \label{eq285.33}
\pi(x)=\int_2^x\frac{1}{\log t} dt+O\left (x e^{-\sqrt{\log x}} \right ) ,
\end{equation}
see Theorem \ref{thm200.02}. Accordingly, the reverse inequality $\pi(x+y) \geq \pi(x)+\pi(y)$ has the integral representation
\begin{equation} \label{eq285.35}
\int_x^{x+y}\frac{1}{\log t} dt+O\left (x e^{-\sqrt{\log x}} \right )\geq \int_2^{y}\frac{1}{\log t} dt +O\left (x e^{-\sqrt{\log x}} \right ).
\end{equation}
By the \textit{mean value theorem} for integral, there is a value $x_0 \in (x,x+y)$, and a value $x_1 \in (2,y)$ such that
\begin{eqnarray} \label{eq285.375}
\frac{y}{\log x_0}+O\left (x e^{-\sqrt{\log x}} \right )&=&\int_x^{x+y}\frac{1}{\log t} dt+O\left (x e^{-\sqrt{\log x}} \right )\nonumber \\ 
&\geq& \int_2^{y}\frac{1}{\log t} dt+O\left (x e^{-\sqrt{\log x}} \right )) \\ 
&=&\frac{y-2}{\log x_1} +O\left (x e^{-\sqrt{\log x}} \right )\nonumber.
\end{eqnarray}
Dividing the left and right sides of (\ref{eq285.375}) by  $y \geq x \log^{-c} x$, and multiplying it by $\log x_1$ give
\begin{equation} \label{eq285.379}
\frac{\log x_1}{\log x_0}+O\left (\frac{(\log x_1)\log^{c}x}{ e^{\sqrt{\log x}}} \right )\geq 1-\frac{2}{y}.
\end{equation}
Based on the data 
\begin{equation} \label{eq285.325}
x \leq x_0 \leq  x+y=x+x \log^{-c} x \qquad \text{   and   } \qquad 2 \leq x_1 \leq y=x \log^{-c} x,
\end{equation}
the upper bound of the left side of (\ref{eq285.379}) is
\begin{eqnarray} \label{eq285.342}
\frac{\log x_1}{\log x_0}+O\left (\frac{(\log x_1)\log^{c}x}{ e^{\sqrt{\log x}}} \right )
&\leq &\frac{\log \left( x \log^{-c} x \right ) }{\log x}+O\left (\frac{\log^{c+1}x}{ e^{\sqrt{\log x}}} \right ) \nonumber \\ 
&\leq& 1-\frac{c\log \log  x}{\log x}+O\left (\frac{\log^{c+1}x}{ e^{\sqrt{\log x}}} \right ).
\end{eqnarray}
And the upper bound of the right side of (\ref{eq285.379}) is
\begin{equation} \label{eq285.345}
1-\frac{2}{y}\leq 1-\frac{2\log^{c}x}{ x}.
\end{equation}
Replacing (\ref{eq285.342}) and (\ref{eq285.345}) into (\ref{eq285.379}) yield
\begin{equation} \label{eq285.347}
1-\frac{2\log^{c}x}{ x} \leq 1-\frac{c\log \log  x}{\log x}+O\left (\frac{\log^{c+1}x}{ e^{\sqrt{\log x}}} \right ).
\end{equation}
Since the left side increases at a faster rate than the right side, this is a contradiction as $x \to \infty$. Ergo, $\pi(x+y) < \pi(x)+\pi(y)$.
\end{proof}

\section{Conditional Result} \label{s295}
 The conditional results for the zeta function and the \textit{prime number theorem}, see Theorem \ref{thm200.02}, together with the \textit{mean value theorem} for integral, are sufficient to extend the prime counting function inequality (\ref{eq120.99}) to the larger interval $[x,x+y]$ with $ x^{1/2} \log^3 x \leq y \leq x$. This is a new result in the mathematical literature.

\begin{proof} (Theorem \ref{thm295.02}) The prime counting function has the integral representation
\begin{equation} \label{eq295.33}
\pi(x)=\int_2^x\frac{1}{\log t} dt+O\left (x^{1/2} \log x \right ) ,
\end{equation}
see Theorem \ref{thm200.02}. Accordingly, the reverse inequality $\pi(x+y) \geq \pi(x)+\pi(y)$ has the integral representation
\begin{equation} \label{eq295.35}
\int_x^{x+y}\frac{1}{\log t} dt+O\left (x^{1/2} \log x \right )\geq \int_2^{y}\frac{1}{\log t} dt+O\left (x^{1/2} \log x \right ).
\end{equation}
By the mean value theorem for integral, there is a value $x_0 \in (x,x+y)$, and a value $x_1 \in (2,y)$ such that
\begin{eqnarray} \label{eq295.375}
\frac{y}{\log x_0}+O\left (x^{1/2} \log x \right )&=&\int_x^{x+y}\frac{1}{\log t} dt+O\left (x^{1/2} \log x \right )\nonumber\\
&\geq& \int_2^{y}\frac{1}{\log t} dt+O\left (x^{1/2} \log x \right )\\
&=&\frac{y-2}{\log x_1} +O\left (x^{1/2} \log x \right )\nonumber.
\end{eqnarray}
Dividing the left and right sides of (\ref{eq295.375}) by $y\geq x^{1/2} \log^3 x$, and multiplying it by $\log x_1$ give
\begin{equation} \label{eq295.379}
\frac{\log x_1}{\log x_0}+O\left (\frac{\log x_1}{ \log^2 x} \right )\geq 1-\frac{2}{y}.
\end{equation}
Based on the data 
\begin{equation} \label{eq295.325}
x \leq x_0 \leq  x+y=x+x^{1/2} \log^3 x \qquad \text{   and   } \qquad 2 \leq x_1 \leq y=x^{1/2} \log^3 x,
\end{equation}
the upper bound of the left side of (\ref{eq295.379}) is
\begin{eqnarray} \label{eq295.342}
\frac{\log x_1}{\log x_0}+O\left (\frac{\log x_1}{ \log^2 x} \right )
&\leq &\frac{\log \left(x^{1/2} \log^3 x \right ) }{\log x}+O\left (\frac{1}{ \log x} \right ) \\
&\leq& \frac{1}{2}+\frac{\log \log^3 x}{\log x}+O\left (\frac{1}{ \log x} \right ) \nonumber.
\end{eqnarray}
And the upper bound of the right side of (\ref{eq295.379}) is
\begin{equation} \label{eq295.345}
1-\frac{2}{y}\leq 1-\frac{2}{x^{1/2}\log^3x}.
\end{equation}
Replacing (\ref{eq295.342}) and (\ref{eq295.345}) into (\ref{eq295.379}), yield
\begin{equation} \label{eq295.347}
1-\frac{2}{x^{1/2}\log^3x}\leq \frac{1}{2}+\frac{\log \log^3 x}{\log x}+O\left (\frac{1}{ \log x} \right ).
\end{equation}
Trivially, this is a contradiction for all large numbers $x \geq 2$. Ergo, $\pi(x+y) < \pi(x)+\pi(y)$.
\end{proof}

\section{Limits Of the Primes Counting Function Inequality} \label{s275}
 There are several results for the oscillations of the primes counting function over small intervals, confer Theorem \ref{thm200.02} and Theorem  \ref{thm200.58}. The oscillations of the values of the prime counting function seems to force some limits on the conjectured Hardy-Littlewood inequality
\begin{equation}
\pi(x+y) < \pi(x)+\pi(y).
\end{equation}

\begin{conj} \label{thm275.01} Let $x\geq 2$ be a large number. Let $y=\log^r x$, where $r >0$ a real number. Then
\begin{equation}
 \pi(x+\log^r x) < \pi(x)+\pi(\log^r x) 
\end{equation}
and 
\begin{equation}
 \pi(x+\log^r x) > \pi(x)+\pi(\log^r x) 
\end{equation}
infinitely often as $x \to \infty$.
\end{conj}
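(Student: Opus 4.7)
The plan is to treat the two inequalities separately. The first follows from a density/averaging argument based on the prime number theorem and is essentially routine. The second appears to be inaccessible unconditionally and must be derived from the Hardy--Littlewood prime $k$-tuples conjecture together with the Hensley--Richards construction of thick admissible sets.

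For the inequality $\pi(x+\log^r x)<\pi(x)+\pi(\log^r x)$, set $y=\log^r x$ and note that by the prime number theorem
$$\pi(y)\sim \frac{\log^r x}{r\log\log x},\qquad \pi(x+y)-\pi(x)\sim \frac{y}{\log x}=\log^{r-1}x\ \text{on average},$$
so $\pi(y)$ exceeds the typical value of $\pi(x+y)-\pi(x)$ by the unbounded factor $\log x/(r\log\log x)$. Bounding the exceptional set $E_X=\{x\le X:\pi(x+y)-\pi(x)\ge \pi(y)\}$ by the mean
$$|E_X|\,\pi(y)\ \le\ \sum_{x\le X}\bigl(\pi(x+y)-\pi(x)\bigr)\ \sim\ \frac{Xy}{\log X}$$
gives $|E_X|=O\bigl(X\log\log X/\log X\bigr)=o(X)$. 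Hence $\pi(x+y)<\pi(x)+\pi(y)$ holds on a set of $x$ of natural density one, and in particular infinitely often.

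For the reverse inequality $\pi(x+\log^r x)>\pi(x)+\pi(\log^r x)$, the strategy is to produce a window $(x,x+\log^r x]$ containing unusually many primes. The Hensley--Richards theorem supplies, for every sufficiently large $m$, an admissible tuple $\mathcal{H}_m=\{h_1<\cdots<h_{k_m}\}\subset [1,m]$ with $k_m-\pi(m)\to\infty$. The prime $k$-tuples conjecture then yields infinitely many $n$ for which $n+h_1,\dots,n+h_{k_m}$ are simultaneously prime, distributed with predicted density $\sim C(\mathcal{H}_m)/\log^{k_m} n$. Selecting such an $n$ in the (exponentially long) range $\lfloor\log^r n\rfloor=m$ and setting $x=n$, the window $(x,x+\log^r x]$ contains at least $k_m$ primes, which strictly exceeds $\pi(\log^r x)\le \pi(m)+O(1)$ for $m$ large. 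Letting $m\to\infty$ along the Hensley--Richards sequence produces infinitely many $x$ with $\pi(x+\log^r x)-\pi(x)>\pi(\log^r x)$.

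The main obstacle is clearly the reverse direction: it rests on the prime $k$-tuples conjecture, which is the very same unresolved conjecture that prevents the Hardy--Littlewood inequality (\ref{eq120.99}) from being a theorem in the first place. The alignment of the Hensley--Richards parameter $m$ with the window $\lfloor\log^r x\rfloor=m$ is a routine bookkeeping matter, since that range of $x$ has exponential length while the expected density of prime tuple positions decreases only polynomially. By contrast, unconditional oscillation theorems of Maier type furnish merely a constant-factor excess of $\pi(x+y)-\pi(x)$ over its mean $y/\log x$, whereas the present setting demands an excess by the unbounded factor $\log x/(r\log\log x)$, which appears to be beyond the reach of currently available methods.
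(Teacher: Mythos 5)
The paper does not actually prove this statement: it is presented as a conjecture, and the author explicitly remarks that it is not clear whether it can be proved or disproved, even with Theorem \ref{thm200.02} or Theorem \ref{thm200.58} in hand. So there is no proof in the paper to compare yours against; the only question is whether your argument settles the statement, and it does not, although half of it is sound. Your first half is correct and unconditional: each prime $p$ is counted for roughly $\log^r p$ values of $x$ in the sum $\sum_{x\le X}\bigl(\pi(x+\log^r x)-\pi(x)\bigr)$, so the sum is $\ll X\log^{r-1}X$, while $\pi(\log^r x)\sim \log^r x/(r\log\log x)$, and the Markov bound gives an exceptional set of density zero. You should patch two small points --- $y=\log^r x$ varies with $x$, so run the argument on dyadic blocks $x\in(X/2,X]$, and discard the small $x$ for which $\pi(\log^r x)$ is not yet of the stated size --- but with that, you have a genuine proof of the first inequality on a density-one set of integers, which is stronger than the conjectured ``infinitely often.'' (For $r>1$ one could alternatively quote Theorem \ref{thm200.58}.)

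The gap is the second half. Your derivation of $\pi(x+\log^r x)>\pi(x)+\pi(\log^r x)$ is the standard Hensley--Richards incompatibility argument, and it is irreducibly conditional: you need not merely the qualitative prime $k$-tuples conjecture but its quantitative form, because you must locate a prime translate $n$ of the admissible tuple $\mathcal{H}_m$ inside the specific range where $\lfloor\log^r n\rfloor=m$, and that forces you to invoke the conjectural density $C(\mathcal{H}_m)N/\log^{k_m}N$. That hypothesis is open, so this direction remains unproved, and the statement as a whole stays a conjecture --- exactly where the paper leaves it. Your closing assessment is accurate: the unconditional oscillation results available (Maier's theorem gives only a constant-factor excess of $\pi(x+y)-\pi(x)$ over its mean $\log^{r-1}x$) fall short of the required unbounded factor $\log x/(r\log\log x)$ by an essential margin, so no currently known unconditional input can replace the $k$-tuples conjecture in your argument. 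In short: you have proved half of the conjecture, reduced the other half to a well-known open problem, and should present the result in that form rather than as a proof of the full statement.
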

It is not clear if it can be proved or disproved by elementary methods, for example, using Theorem \ref{thm200.58} or Theorem  \ref{thm200.02}. In synopsis, this inequality is likely to fail on very short intervals. This has some relevance to the prime $k$-tuples conjecture, see \cite{HR73}. Some detailed information on the hierarchy of  prime $k$-tuples conjectures are explicated in a new survey, \cite[p.\ 10]{FG18}.

\section{Prime Numbers Theorems} \label{s200}
The omega notation $f (x)= g(x)+\Omega_{\pm}(h(x))$ means that both $f (x) > g(x)+ c_0h(x)$ and $f (x) < g(x)- c_1h(x)$ occur infinitely often as $x \to \infty$, where $c_0 > 0$ and $c_1 > 0$ are constants, see \cite[p.\ 5]{MV07}, and similar references. The set of prime numbers is denoted by $\mathbb{P}=\{2,3,5, \ldots \}$, and for a real number $x \geq 1$, the standard prime counting function is denoted by 
\begin{equation}
\pi(x)= \#\{ p \leq x : p \text{ prime }\}=\sum_{p \leq x} 1.   
\end{equation}
In addition, the logarithm integral and multiple logarithm integral are defined by $\li(x)=\int_2^x \frac{1}{\log t}dt$ and $\li_k(x)=\int_2^x \frac{1}{\log^k t}dt$, $k \geq 1$, respectively. The weighted primes counting functions, psi $\psi(x)$ and theta $\theta(x)$, are defined by
\begin{equation}
\theta(x)=\sum_{p \leq x} \log p \quad \text{ and } \quad \psi(x)=\sum_{p^k \leq x} \log p^k                 
\end{equation}
respectively.

\begin{thm} \label{thm200.01} Uniformly for $x \geq 2$ the psi and theta functions have the followings asymptotic formulae.
\begin{enumerate} [font=\normalfont, label=(\roman*)]
\item  \text{Unconditionally,}
$$
\theta(x)=x +O\left (xe^{-c_0 \sqrt{\log x}}\right ) .
$$
\item \text{Unconditional oscillation,}
$$
\theta(x)=x +\Omega_{\pm} \left ( x^{1/2}\log \log \log x \right ).  $$

\item \text{Conditional on the RH,}
$$
\theta(x)=x +O\left (x^{1/2} \log^2 x \right ).  
$$
\end{enumerate}
\end{thm}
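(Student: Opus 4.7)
The plan is to derive all three parts from the von Mangoldt explicit formula
$$\psi(x) = x - \sum_{\rho} \frac{x^\rho}{\rho} - \log(2\pi) - \frac{1}{2}\log(1 - x^{-2}),$$
where $\rho = \beta + i\gamma$ runs over the nontrivial zeros of $\zeta(s)$, and then to transfer from $\psi$ to $\theta$ via the elementary identity $\psi(x) - \theta(x) = \sum_{k\geq 2}\theta(x^{1/k}) = O(x^{1/2})$. With this reduction, the task in each of (i) and (iii) becomes that of estimating the sum over zeros, and (ii) becomes a question about how much cancellation that sum can achieve.

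For (i) I would invoke the de la Vall\'ee Poussin zero-free region $\beta < 1 - c/\log(|\gamma|+2)$. Using a truncated explicit formula up to height $T$ and shifting the contour to the left edge of that region, the sum over zeros contributes $O\bigl(x\exp(-c_1\log x/\log T)\bigr)$ while the truncation/tail terms contribute $O\bigl(x\log^2(xT)/T\bigr)$. Choosing $T = \exp(\sqrt{\log x})$ balances the two bounds and yields the claimed $O\bigl(xe^{-c_0\sqrt{\log x}}\bigr)$.

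For (iii), the Riemann Hypothesis forces $|x^\rho| = x^{1/2}$ for every nontrivial zero. Combined with the Riemann--von Mangoldt density bound $N(T+1) - N(T) \ll \log T$, a dyadic decomposition gives $\sum_{0 < |\gamma| \leq T} 1/|\rho| \ll \log^2 T$. Truncating the explicit formula at $T = x$ then makes the contribution of the zeros $O(x^{1/2}\log^2 x)$ and absorbs the tail in the same order. The transfer from $\psi$ to $\theta$ costs only $O(x^{1/2})$, which is subsumed.

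Part (ii) is the main obstacle, and I expect to lean on the classical argument of Littlewood. The plan is to invoke Landau's theorem on the singularities of Dirichlet series: if $\theta(x) - x$ had eventually constant sign, then the Mellin-type transform $\int_1^\infty (\theta(x)-x)x^{-s-1}\,dx$, which essentially encodes $-\zeta'(s)/(s\zeta(s))$ shifted by the pole, would have its rightmost singularity on the real axis. This contradicts the existence of nontrivial zeros of $\zeta$ with real part equal to the supremum $\Theta \geq 1/2$, forcing both signs of $\theta(x)-x$ to occur infinitely often. To upgrade this qualitative statement to the quantitative $\Omega_{\pm}(x^{1/2}\log\log\log x)$ form, one uses a truncated explicit formula at height $T$ where the packet of low-lying zeros is visible: picking $T \asymp (\log\log x)^{A}$ and appealing to Kronecker-style simultaneous approximation of the ordinates $\gamma$ yields an unavoidable oscillation of order $x^{1/2}\log\log\log x$ in the partial sum of $x^\rho/\rho$. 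The hardest step will be making this oscillation argument go through uniformly whether or not RH holds, since the two cases produce the bound by quite different mechanisms.
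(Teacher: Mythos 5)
Your outline is mathematically sound, but note that the paper does not actually prove this theorem: its entire ``proof'' is a one-line citation of Montgomery--Vaughan (p.~479) for part (ii), with parts (i) and (iii) left implicitly to the surrounding references. What you have written is essentially a reconstruction of the classical arguments those references contain. For (i) and (iii) your sketch is the standard and correct one: the truncated explicit formula with error $O\left(x\log^2(xT)/T\right)$, the de la Vall\'ee Poussin zero-free region balanced at $T=\exp(\sqrt{\log x})$ for (i), and the density bound $\sum_{0<|\gamma|\le T}|\rho|^{-1}\ll\log^2 T$ with $T=x$ for (iii); the transfer $\psi(x)-\theta(x)=O(x^{1/2})$ is harmless in both. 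Part (ii) is where your proposal is a road map rather than a proof: the two-case strategy (Landau's lemma when RH fails, Dirichlet simultaneous approximation of the ordinates when RH holds) is indeed Littlewood's, but the quantitative core --- choosing $T$ so that the roughly $T\log T$ ordinates up to height $T$ can be simultaneously approximated while $\log x$ stays controlled, which forces $T$ to be a small power of $\log\log x$ and is precisely what produces the $\log\log\log x$ factor --- is the entire difficulty, and you have only named it. One detail worth making explicit in a final write-up: in part (ii) the passage from $\psi$ to $\theta$ subtracts a genuine main term $x^{1/2}+O(x^{1/3})$, not just an error term, so the oscillation survives only because $x^{1/2}=o\left(x^{1/2}\log\log\log x\right)$; an $\Omega_{\pm}(x^{1/2})$ statement would not survive the same transfer. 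In short, your route is the honest one and matches the sources the paper points to; the paper's route buys brevity at the cost of containing no argument.
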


\begin{proof} (ii) The oscillations form of the theta function is proved in \cite[p.\ 479]{MV07}, 
\end{proof}

The same asymptotics hold for the function $\psi(x)$. Explicit estimates for both of these functions are given in \cite{CP85}, \cite{SL76}, \cite[Theorem 5.2]{DP10}, and related literature. 

\begin{conj} \label{conj200.01} Assuming the RH and the LI conjecture, the suprema are
\begin{equation}
\lim \inf_{x \to \infty} \frac{\psi(x)-x}{\sqrt{x} (\log \log x)^2}=\frac{-1}{\pi} \qquad \text{and} \qquad \lim \sup_{x \to \infty} \frac{\psi(x)-x}{\sqrt{x} (\log \log x)^2}=\frac{1}{\pi}.
\end{equation}
\end{conj}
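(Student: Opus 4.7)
The plan is to invoke the von Mangoldt explicit formula together with the Rubinstein--Sarnak framework for limiting distributions of prime-counting errors. Under RH every nontrivial zero has the form $\rho = 1/2 + i\gamma$, and symmetric summation gives
\begin{equation*}
\frac{\psi(x) - x}{\sqrt{x}} \;=\; -\sum_{\gamma} \frac{x^{i\gamma}}{\tfrac12 + i\gamma} \;+\; O\!\left(\frac{\log^2 x}{\sqrt{x}}\right).
\end{equation*}
Setting $u = \log x$ reduces the problem to understanding the extreme values of the almost-periodic function $F(u) = -\sum_\gamma e^{i\gamma u}/(\tfrac12 + i\gamma)$, from which both one-sided limits of $(\psi(x)-x)/(\sqrt{x}(\log\log x)^2)$ would be extracted.

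First I would truncate at $|\gamma| \le T$, using the Riemann--von Mangoldt formula $N(T) \sim (T/2\pi)\log T$ to control the tail. The LI hypothesis asserts linear independence of the positive ordinates over $\Q$, so Weyl's equidistribution theorem implies that $(\gamma_1 u, \gamma_2 u, \ldots) \bmod 2\pi$ equidistributes on the infinite torus. Averaging with respect to logarithmic density identifies the limiting distribution of $F(u)$ with that of the random series $Y = -\sum_\gamma e^{i\theta_\gamma}/(\tfrac12 + i\gamma)$, where the $\theta_\gamma$ are independent and uniform on $[0,2\pi)$. A saddle-point analysis of the moment generating function $\mathbb{E}[\exp(\lambda \operatorname{Re} Y)]$, using the zero density to evaluate the contribution of each term, should yield a tail estimate of the form $\log \Pr(\operatorname{Re} Y > V) \sim -c V/\log V$ with a constant $c$ linked directly to $\pi$ through the counting function of zeros. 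The conjectured constant $1/\pi$ would then emerge by matching this large-deviation rate against the proposed normalization in the denominator.

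The main obstacle is passing from a logarithmic density statement to a genuine pointwise $\limsup$ and $\liminf$. The Rubinstein--Sarnak machinery naturally produces the density of the set of $x$ on which $F(\log x)$ is large, but not that such values recur out to infinity; to extract sharp extreme values one needs quantitative equidistribution of the finite phase vector plus a mechanism showing that near-optimal alignments of finitely many leading phases actually occur along the sequence $x \to \infty$ while the omitted tail remains negligible. Pinning down the exact constant $1/\pi$, rather than merely two-sided bounds, is the most delicate step, since it requires an exact saddle-point evaluation of the characteristic function of $Y$ and a precise calibration against the scale $(\log\log x)^2$; indeed, no known argument under RH and LI alone upgrades the density statement to the pointwise supremum, which is why the assertion is stated here only as a conjecture.
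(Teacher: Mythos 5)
The statement you were asked to prove is labeled a \emph{Conjecture} in the paper (it is essentially Montgomery's conjecture on the extreme order of $\psi(x)-x$), and the paper supplies no proof of it whatsoever --- it is stated without argument and plays no role in the proofs of the paper's theorems. So there is no proof of the paper's to compare yours against, and the honest answer, which you in fact reach in your final paragraph, is that no proof is currently possible even assuming RH and LI.

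Your sketch is a faithful account of the standard heuristic behind the conjecture: the explicit formula under RH, truncation controlled by the Riemann--von Mangoldt count $N(T)\sim (T/2\pi)\log T$, the Kronecker--Weyl equidistribution on the infinite torus supplied by LI, the resulting random model $Y=-\sum_\gamma e^{i\theta_\gamma}/(\tfrac12+i\gamma)$ with independent uniform phases, and a large-deviation analysis of $Y$ calibrated against the normalization $(\log\log x)^2$. You also correctly isolate the two genuine obstructions: (a) the Rubinstein--Sarnak machinery yields statements about logarithmic densities of the set of $x$ where $F(\log x)$ is large, not pointwise $\limsup$ and $\liminf$ along $x\to\infty$, and no known argument under RH and LI bridges that gap; and (b) extracting the exact constant requires a precise saddle-point evaluation that has not been carried out rigorously. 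One substantive caution: the constant in the commonly cited form of Montgomery's conjecture is $\pm 1/(2\pi)$ rather than $\pm 1/\pi$, so before investing further effort in matching the large-deviation rate to the paper's normalization you should verify which constant the heuristic actually produces; the discrepancy suggests a possible typo in the paper's statement. In summary, your proposal is not a proof and cannot be one with current technology, but it is the correct heuristic and you have correctly identified why it falls short --- which is consistent with the paper presenting the statement only as a conjecture.
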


More details on the Linear Independence conjecture appear in \cite{IA42}, \cite[Theorem 6.4]{EE85}, and recent literature. The LI conjecture asserts that the imaginary parts of the nontrivial zeros $\rho_n=1/2+i \gamma_n$ of the zeta function $\zeta(s)$ are linearly independent over the set $\{-1,0, 1\}$. In short, the equations  
\begin{equation}
\sum_{1 \leq n \leq M} r_n \gamma_n=0,
\end{equation}
where $r_n \in \{-1,0, 1\}$, have no nontrivial solutions. 

\begin{thm} \label{thm200.02} {\normalfont (Prime number theorem)} Let $x \geq 1$ be a large number. Then
\begin{enumerate} [font=\normalfont, label=(\roman*)]
\item  \text{Unconditionally,}
$$
\pi(x)=\li(x) +O\left (xe^{-c_0 \sqrt{\log x}}\right ) .
$$
\item \text{Unconditional oscillation,}
$$
\pi(x)=\li(x) +\Omega_{\pm} \left (\frac{ x^{1/2}\log \log \log x}{\log x} \right ) . $$

\item \text{Conditional on the RH,}
$$
\pi(x)=\li(x) +O\left (x^{1/2} \log x \right ).  
$$
\end{enumerate}
\end{thm}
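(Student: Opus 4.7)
The plan is to deduce each of the three estimates on $\pi(x)$ from the corresponding estimate on $\theta(x)$ supplied by Theorem~\ref{thm200.01}, via Abel summation. Writing $\pi(x)=\sum_{p\le x}(\log p)\cdot(1/\log p)$ and summing by parts against the smooth function $t\mapsto 1/\log t$ produces the identity
$$
\pi(x)=\frac{\theta(x)}{\log x}+\int_{2}^{x}\frac{\theta(t)}{t\log^{2}t}\,dt.
$$
Setting $\theta(t)=t+E(t)$ with $E$ the relevant error term, and integrating $\li(x)=\int_{2}^{x}dt/\log t$ by parts, the deterministic pieces collapse into
$$
\frac{x}{\log x}+\int_{2}^{x}\frac{dt}{\log^{2}t}=\li(x)+O(1),
$$
so the task reduces to estimating the boundary contribution $E(x)/\log x$ and the remainder integral $\int_{2}^{x}E(t)/(t\log^{2}t)\,dt$ in each of the three regimes.

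For part (i), inserting the unconditional bound $E(t)\ll te^{-c_{0}\sqrt{\log t}}$ from Theorem~\ref{thm200.01}(i) yields a boundary term of order $xe^{-c_{0}\sqrt{\log x}}/\log x$; after the substitution $u=\log t$ the remainder integral is seen to be dominated by its tail near $t=x$, giving $\ll xe^{-c_{0}\sqrt{\log x}}/\log^{2}x$, and both pieces are absorbed into $O(xe^{-c_{0}\sqrt{\log x}})$. For part (iii), the RH bound $E(t)\ll t^{1/2}\log^{2}t$ from Theorem~\ref{thm200.01}(iii) contributes a boundary term of size $x^{1/2}\log x$ and a remainder integral $\int_{2}^{x}t^{-1/2}\,dt\ll x^{1/2}$, so the boundary term dominates and gives $\pi(x)=\li(x)+O(x^{1/2}\log x)$.

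Part (ii) is the main obstacle: the unconditional bound available on $|E(t)|$ is vastly larger than the target oscillation scale $x^{1/2}\log\log\log x/\log x$, so the $\Omega_{\pm}$-result for $\theta$ cannot simply be transported through the Abel identity above. The route I would take is to bypass $\theta$ altogether and work directly from the explicit formula
$$
\psi(x)=x-\sum_{\rho}\frac{x^{\rho}}{\rho}-\log(2\pi)-\tfrac{1}{2}\log\bigl(1-x^{-2}\bigr),
$$
convert it to the corresponding expansion for $\pi(x)-\li(x)$ (each zero term acquiring an extra $1/\log x$ from the partial summation), and then apply the Diophantine argument of Littlewood, which forces infinitely many simultaneous sign alignments among the low-lying zeros and thereby produces the $\log\log\log x$ amplification. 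This transfer is standard, and for the details I would refer to the treatment in \cite{MV07}.
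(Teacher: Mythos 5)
Your proposal is correct, but it takes a genuinely different route from the paper: the paper offers no argument at all for this theorem, merely attributing (i) to de la Vall\'ee Poussin \cite[p.\ 179]{MV07}, (ii) to Littlewood \cite[p.\ 51]{IA03}, \cite[p.\ 479]{MV07}, and (iii) to the Riemann form with the explicit version in \cite[Corollary 1]{SL76}. You instead derive (i) and (iii) from the corresponding $\theta$-estimates of Theorem \ref{thm200.01} by Abel summation, and your identity
$\pi(x)=\theta(x)/\log x+\int_2^x\theta(t)t^{-1}\log^{-2}t\,dt$, the collapse of the main terms to $\li(x)+O(1)$, and the two error estimates are all sound (in (i) one should check that the tail integral $\int_2^x e^{-c_0\sqrt{\log t}}\log^{-2}t\,dt$ really is $\ll xe^{-c_0\sqrt{\log x}}$, e.g.\ by splitting at $T=xe^{-c_0\sqrt{\log x}}$ and using $\sqrt{\log T}\ge\sqrt{\log x}-c_0$; this works with no loss in the constant). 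What your approach buys is an actual reduction of (i) and (iii) to Theorem \ref{thm200.01}, making the logical dependence explicit, whereas the paper treats all three parts as black boxes. You also correctly identify the one genuine obstruction: (ii) cannot be transported through the Abel identity, since the unconditional bound on $\theta(t)-t$ swamps the oscillation scale. Be aware, though, that your sketch for (ii) is still essentially a citation dressed in the language of the explicit formula: the phrase ``each zero term acquiring an extra $1/\log x$'' is heuristic, and the rigorous transfer (handling the uncontrolled integral $\int_2^x(\psi(t)-t)t^{-1}\log^{-2}t\,dt$, and the $x^{1/2}/\log x$ shift coming from $\psi-\theta$) is done in \cite{MV07} via Landau's oscillation theorem applied to a suitable Dirichlet integral rather than by termwise partial summation. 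Since the paper itself gives no more detail there, this is an acceptable parity of rigor, but (ii) remains unproved in both treatments except by reference.
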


\begin{proof} (i) The unconditional part of the prime counting formula arises from the delaVallee Poussin form $\pi(x)=\li(x)+O\left (xe^{-c_0 \sqrt{\log x}}\right )$ of the prime number  theorem, see \cite[p.\ 179]{MV07}. Recent information on the constant $c_0>0$ and the sharper estimate $\pi(x)=\li(x)+O\left ( x e^{-c_0 \log x^{3/5}(\log \log x)^{-2/5}}\right )$ appears in \cite{FK02} and \cite[p.\ 307]{IV03}. The constant $c = .2018$ is computed in \cite{FK02}. \\

(ii) The unconditional oscillations part arises 
from the Littlewood form $\pi(x)=\li(x)+\Omega_{\pm} \left (x^{1/2}\log \log \log  x /\log x \right )$ of the prime number theorem, 
consult \cite[p.\ 51]{IA03}, \cite[p.\ 479]{MV07}, et cetera. \\

(iii)  The conditional part arises from the Riemann form $\pi(x)=\li(x)+O\left (x^{1/2}\log x \right )$ of the prime number theorem. In \cite[Corollary 1]{SL76} there is an explicit version. 
\end{proof}

New explict estimates for the number of primes in arithmetic progressions are computed in \cite{BR18}.

\begin{thm} {\normalfont(\cite{MV75})}  \label{thm200.32} For all real numbers $x > 1$, and and any monotonically increasing function $\theta(x) \geq 2$, 
\begin{equation}
\pi(x+\theta(x))-\pi(x)\leq  \frac{2\theta(x)}{\log \theta(x)} .
\end{equation}
\end{thm}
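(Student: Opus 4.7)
The plan is to apply a sifting argument to the set $\mathcal{A} = \mathbb{P} \cap (x, x+N]$, where $N = \theta(x)$, so that $|\mathcal{A}| = \pi(x+\theta(x))-\pi(x)$. The natural tool is the large sieve in its Montgomery--Vaughan form (or equivalently Selberg's $\Lambda^2$ upper bound sieve), which is tailor-made for counting primes in short intervals.

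First, I would observe that every prime $p \in \mathcal{A}$ with $p > \sqrt{N}$ is coprime to each prime $q \leq Q := \sqrt{N}$, so $\mathcal{A}$ avoids the residue class $0 \bmod q$ for every such $q$. The $O(\sqrt{N})$ primes below $\sqrt{N}$ are a negligible lower-order contribution. This recasts the task as a sieve problem with $\omega(p) = 1$ sifting primes for every $p \leq Q$. Next, I would invoke the arithmetic large sieve inequality in the form
$$|\mathcal{A}| \leq \frac{N + Q^2}{\displaystyle\sum_{q \leq Q} \frac{\mu^2(q)}{\phi(q)}},$$
since the general factor $\prod_{p\mid q}\omega(p)/(p-\omega(p))$ collapses to $1/\phi(q)$ in our setting. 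Combined with the classical lower estimate $\sum_{q\leq Q}\mu^2(q)/\phi(q) \geq \log Q$ and the choice $Q = \sqrt{N}$, this immediately produces a bound of the correct shape $O(N/\log N)$.

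The main obstacle is extracting the sharp constant $2$ rather than the weaker constant $4$ that the naive substitution $Q = \sqrt{N}$ yields in the inequality above. This is precisely the content of the Montgomery--Vaughan refinement: by using the full asymptotic $\sum_{q\leq Q}\mu^2(q)/\phi(q) = \log Q + C_0 + o(1)$ together with an optimized choice of sieve weights (equivalently, the sharp dual-norm constant in the large sieve inequality rather than the crude $N + Q^2$ numerator), one recovers the factor $2$. A careful balancing of $Q$ slightly below $\sqrt{N}$, so that $Q^2 = o(N)$ while $\log Q \sim \tfrac{1}{2}\log N$, is the calculation where the constant is finally pinned down.

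Finally, I would absorb the $O(\sqrt{N})$ contribution from the small primes into the main term and use $\theta(x) \geq 2$ to ensure $\log \theta(x) > 0$; the monotonicity hypothesis enters only to guarantee that $\log \theta(x)$ stays bounded away from zero uniformly in $x > 1$. For small $x$ (and hence small $N$) the inequality should be verified directly or by a simple numerical check, since the sieve estimate is asymptotic in nature. This yields the stated bound $\pi(x+\theta(x))-\pi(x) \leq 2\theta(x)/\log \theta(x)$.
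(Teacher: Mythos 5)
The paper does not actually prove this statement; it is quoted verbatim from Montgomery and Vaughan's large sieve paper \cite{MV75}, so your proposal is being measured against the known proof of that cited result. Your overall strategy --- sift $\mathcal{A}=\mathbb{P}\cap(x,x+N]$ with $N=\theta(x)$ by the arithmetic large sieve --- is indeed the strategy of \cite{MV75}, and your diagnosis that the whole difficulty lies in the constant $2$ is exactly right. The problem is that the route you then describe does not close that difficulty. Starting from $|\mathcal{A}|\leq (N+Q^2)/L(Q)$ with $L(Q)=\sum_{q\leq Q}\mu^2(q)/\phi(q)\geq \log Q+c_1$, the optimal choice of $Q$ satisfies $Q^2(2\log Q+2c_1-1)=N$, and substituting back gives
\begin{equation*}
\min_{Q}\frac{N+Q^2}{\log Q+c_1}\;=\;\frac{2N}{2\log Q+2c_1-1}\;=\;\frac{2N}{\log N-\log\log N+O(1)}\;>\;\frac{2N}{\log N}
\end{equation*}
for all large $N$. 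So "balancing $Q$ slightly below $\sqrt{N}$" yields $(2+O(\log\log N/\log N))\,N/\log N$: the excess over the constant $2$ is strictly positive for \emph{every} $N$ and only tends to $0$, so it can never be absorbed by a finite numerical check of small cases, contrary to your final paragraph. The same objection applies to the $\pi(\sqrt{N})$ primes you propose to "absorb into the main term": the main term has no slack to absorb anything.

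The missing ingredient is the actual content of \cite{MV75}: their weighted form of the arithmetic sieve (Theorem 2 there), in which the single global factor $N+Q^2$ is replaced by a modulus-dependent cost, schematically
\begin{equation*}
Z\;\leq\;\Bigl(\sum_{q\leq Q}\bigl(N+\tfrac{3}{2}qQ\bigr)^{-1}\mu^{2}(q)\prod_{p\mid q}\frac{\omega(p)}{p-\omega(p)}\Bigr)^{-1},
\end{equation*}
so that small moduli are charged essentially $N$ rather than $N+Q^2$. It is only after this refinement (plus a careful partial summation against $\sum_{q\leq t}\mu^2(q)/\phi(q)\geq\log t$) that the clean constant $2$ emerges uniformly for all $N\geq 2$, with no $o(1)$. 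Your phrase about "the sharp dual-norm constant in the large sieve inequality rather than the crude $N+Q^2$ numerator" gestures at this, but the sharp analytic large sieve constant ($N+Q^2$ versus $N-1+Q^2$) is not the issue; the issue is the $q$-dependent weighting, which you neither state nor use. As written, the proposal proves $\pi(x+\theta(x))-\pi(x)\leq(2+o(1))\theta(x)/\log\theta(x)$, which is a genuinely weaker statement than the theorem. A minor further point: the monotonicity hypothesis on $\theta$ plays no role in either your argument or the true one --- the bound is pointwise in $y=\theta(x)\geq 2$ --- so invoking it to keep $\log\theta(x)$ away from zero is a red herring; $\theta(x)\geq 2$ alone does that.
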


\begin{thm} {\normalfont(\cite{MH85})}  \label{thm200.58} Let $\theta(x) =(\log x)^r$, where $r>1$. Then 
\begin{equation}
\liminf_{x\to \infty} \frac{\pi(x+\theta(x))-\pi(x)}{\theta(x)/\log x}<1  \qquad \text{ and } \qquad \limsup_{x\to \infty} \frac{\pi(x+\theta(x))-\pi(x)}{\theta(x)/\log x}>1.
\end{equation}
For the range $1<r< e^{\gamma}$, the limit supremum is
\begin{equation}
 \limsup_{x\to \infty} \frac{\pi(x+\theta(x))-\pi(x)}{\theta(x)/\log x}\geq \frac{ e^{\gamma}}{r},
\end{equation}
where $\gamma$ denotes Euler constant.
\end{thm}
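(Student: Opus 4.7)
The plan is to invoke Maier's matrix method. Fix $r>1$, write $h=\theta(x)=(\log x)^r$, introduce a parameter $y=y(x)\to\infty$, set $Q=\prod_{p\le y}p$ (so $\log Q\sim y$ by Chebyshev's estimate) and choose $D$ with $QD\asymp x$. Form the Maier matrix whose $(d,a)$-entry is $Qd+a$ for $D<d\le 2D$ and for $a\in[1,h]$ with $(a,Q)=1$. Each row corresponds to a short interval $(Qd,Qd+h]$ and each column to an arithmetic progression modulo $Q$; since every prime in a row exceeding $y$ sits in some admissible column, the matrix captures all the primes of interest.

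The main computation is to count the total number of primes in the matrix by summing columnwise. For $Q\le x^{1/2-\varepsilon}$, the Bombieri--Vinogradov theorem gives
\[
\pi(2QD;Q,a)-\pi(QD;Q,a)=(1+o(1))\,\frac{QD}{\varphi(Q)\log(QD)}
\]
for all but $o(\varphi(Q))$ reduced residues $a$. Summing over the $\Phi(h,y):=\#\{a\le h:(a,Q)=1\}$ admissible columns and dividing by $D$ yields the average number of primes per row,
\[
\frac{\Phi(h,y)}{\varphi(Q)}\cdot\frac{Q}{\log x}\,(1+o(1)).
\]
Applying Buchstab's sieve asymptotic $\Phi(h,y)=h\,\omega(u)/\log y+O(h/\log^2 y)$, with $u=\log h/\log y$, together with Mertens's formula $Q/\varphi(Q)\sim e^{\gamma}\log y$, collapses the factors of $\log y$ and identifies the row-average with
\[
\omega(u)\,e^{\gamma}\cdot\frac{h}{\log x}\,(1+o(1)).
\]

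The theorem then follows from the oscillation of Buchstab's function about $e^{-\gamma}$: some row must realize at least the row-average and some at most, so it suffices to produce a sequence of $x$ along which $\omega(u)e^{\gamma}>1$ and another along which $\omega(u)e^{\gamma}<1$. Both $\omega(u)>e^{-\gamma}$ and $\omega(u)<e^{-\gamma}$ occur infinitely often (already $\omega(u)=1/u$ on $[1,2]$ crosses $e^{-\gamma}$ at $u=e^{\gamma}$), which proves $\limsup>1$ and $\liminf<1$. For the explicit bound in the range $1<r<e^{\gamma}$, I would push $y$ toward the upper admissible threshold $y\sim(1/2-\varepsilon)\log x$ so that $u=r+o(1)$ lies in $[1,2]$; then $\omega(u)=1/u$ exactly, and the row-average becomes $(e^{\gamma}/r+o(1))\,h/\log x$, giving the stated bound $\limsup\ge e^{\gamma}/r$.

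The main obstacle is the tension between two opposing constraints on $y$: small enough that $Q=\prod_{p\le y}p\le x^{1/2-\varepsilon}$ for the Bombieri--Vinogradov error to be absorbed, yet large enough that $u=\log h/\log y$ reaches a value of $\omega$ cleanly separated from $e^{-\gamma}$. With $h=(\log x)^r$ this confines $u$ to $[r+o(1),\infty)$, so for $r$ outside $(1,e^{\gamma})$ the argument must rely on oscillations of $\omega$ beyond its easy piece on $[1,2]$, which come from the delay-differential equation $(u\omega(u))'=\omega(u-1)$. Converting the row-averaged asymptotic into an explicit sequence $(x_k)$ witnessing the inequalities is the step that Maier's matrix machinery is specifically designed to accomplish.
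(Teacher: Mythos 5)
The paper itself gives no proof of this theorem; it is imported verbatim from Maier's paper \cite{MH85}, so your proposal has to be measured against Maier's argument. Your overall architecture is exactly his: the matrix $(Qd+a)$ with $Q=\prod_{p\le y}p$, columnwise counting, Buchstab's asymptotic $\Phi(h,y)\sim h\,\omega(u)/\log y$ against Mertens's $Q/\varphi(Q)\sim e^{\gamma}\log y$, the pigeonhole on rows, the oscillation of $\omega(u)-e^{-\gamma}$, and the observation that $u\to r$ with $\omega(r)=1/r$ for $1<r<e^{\gamma}<2$ yields the explicit bound $e^{\gamma}/r$. That part is right, including your closing remark that $u$ is confined to $[r+o(1),\infty)$ so that general $r$ requires the sign changes of $\omega-e^{-\gamma}$ coming from the delay-differential equation.

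The genuine gap is the analytic input you chose. Bombieri--Vinogradov cannot deliver the asymptotic $\pi(2QD;Q,a)-\pi(QD;Q,a)\sim QD/(\varphi(Q)\log x)$ for the \emph{single} modulus $Q$. The theorem bounds $\sum_{q\le x^{1/2-\varepsilon}}\max_a|\pi(x;q,a)-\li(x)/\varphi(q)|$ by $x(\log x)^{-A}$; restricting to one modulus $q=Q$ only tells you the discrepancy is $O(x(\log x)^{-A})$, which is far larger than the main term $x/\varphi(Q)\asymp x^{1-1/C}\log\log x$ when $Q$ is a power of $x$, and there is no legitimate ``for all but $o(\varphi(Q))$ residues $a$'' version for a fixed modulus either (the Barban--Davenport--Halberstam variance bound also fails here for the same reason). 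This is precisely the obstacle Maier's paper is built to overcome: he takes $x=(QD)\asymp Q^{C}$ with $C$ a large constant (so $y\sim\log x/C$, not $y\sim(1/2-\varepsilon)\log x$; note $u\to r$ either way) and invokes Gallagher's theorem that, apart from multiples of at most one exceptional modulus, the $L$-functions mod $q$ have a Landau--Page type zero-free region giving $\pi(x;q,a)=\bigl(1+O(e^{-c\log x/\log q})\bigr)\li(x)/\varphi(q)$ uniformly for $x\ge q^{C}$, together with a device for steering $Q$ away from the possible exceptional modulus. Without replacing your Bombieri--Vinogradov step by this (or an equivalent individual-modulus equidistribution statement), the columnwise count --- and hence the whole proof --- does not go through.
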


\end{document}